\newtheorem{thm}{\sc Theorem}[section]
\newtheorem{lem}[thm]{\sc Lemma}
\newtheorem{cor}[thm]{\sc Corollary}
\theoremstyle{definition}
\theoremstyle{definition}
\theoremstyle{definition}
\theoremstyle{definition}
\numberwithin{equation}{section}
\chardef\@x10\chardef\@xv60
\def\tcitime{
\def\@time{%
  \@minute\time\@hour\@minute\divide\@hour\@xv
  \ifnum\@hour<\@x 0\fi\the\@hour:%
  \multiply\@hour\@xv\advance\@minute-\@hour
  \ifnum\@minute<\@x 0\fi\the\@minute
  }}%
\def\QCTOpt[#1]#2{%
  \def\QCTOptB{#1}
  \def\QCTOptA{#2}
}
\def\QCTNOpt#1{%
  \def\QCTOptA{#1}
  \let\QCTOptB\empty
}
\def\Qct{%
  \@ifnextchar[{%
    \QCTOpt}{\QCTNOpt}
}
\def\QCBOpt[#1]#2{%
  \def\QCBOptB{#1}
  \def\QCBOptA{#2}
}
\def\QCBNOpt#1{%
  \def\QCBOptA{#1}
  \let\QCBOptB\empty
}
\def\Qcb{%
  \@ifnextchar[{%
    \QCBOpt}{\QCBNOpt}
}
\def\PrepCapArgs{%
  \ifx\QCBOptA\empty
    \ifx\QCTOptA\empty
      {}%
    \else
      \ifx\QCTOptB\empty
        {\QCTOptA}%
      \else
        [\QCTOptB]{\QCTOptA}%
      \fi
    \fi
  \else
    \ifx\QCBOptA\empty
      {}%
    \else
      \ifx\QCBOptB\empty
        {\QCBOptA}%
      \else
        [\QCBOptB]{\QCBOptA}%
      \fi
    \fi
  \fi
}
\def\GRAPHICSPS#1{%
 \ifcase\GRAPHICSTYPE
   \special{ps: #1}%
 \or
   \special{language "PS", include "#1"}%
 \fi
}%
\def\graffile#1#2#3#4{%
    \leavevmode
    \raise -#4 \BOXTHEFRAME{%
        \hbox to #2{\raise #3\hbox to #2{\null #1\hfil}}}%
}%
\def\draftbox#1#2#3#4{%
 \leavevmode\raise -#4 \hbox{%
  \frame{\rlap{\protect\tiny #1}\hbox to #2%
   {\vrule height#3 width\z@ depth\z@\hfil}%
  }%
 }%
}%
\newif\ifwasdraft
\def\GRAPHIC#1#2#3#4#5{%
 \ifnum\draft=\@ne\draftbox{#2}{#3}{#4}{#5}%
  \else\graffile{#1}{#3}{#4}{#5}%
  \fi
 }%
\def\addtoLaTeXparams#1{%
    \edef\LaTeXparams{\LaTeXparams #1}}%
\newif\ifBoxFrame \BoxFramefalse
\newif\ifOverFrame \OverFramefalse
\newif\ifUnderFrame \UnderFramefalse
\def\BOXTHEFRAME#1{%
   \hbox{%
      \ifBoxFrame
         \frame{#1}%
      \else
         {#1}%
      \fi
   }%
}
\def\doFRAMEparams#1{\BoxFramefalse\OverFramefalse\UnderFramefalse\readFRAMEparams#1\end}%
\def\readFRAMEparams#1{%
   \ifx#1\end%
  \let\next=\relax
  \else
  \ifx#1i\dispkind=\z@\fi
  \ifx#1d\dispkind=\@ne\fi
  \ifx#1f\dispkind=\tw@\fi
	\ifx#1h
    \ifnum\dispkind=\tw@
			\@ifundefined{@HHfloat}{
			  \addtoLaTeXparams{h}
		 	 }{
         \def\LaTeXparams{H}
         \typeout{tcilatex: attribute align pos of FRAME  set to H}
         \typeout{\space \space \space \space all other placement options (tbp) are ignored }
   		 }
	  \else
			\addtoLaTeXparams{h}
    \fi
	\fi
  \if\LaTeXparams H
  	 \ifx#1t\fi	 
  	 \ifx#1b\fi	 
     \ifx#1p\fi
  \else
      \ifx#1t\addtoLaTeXparams{t}\fi
      \ifx#1b\addtoLaTeXparams{b}\fi
      \ifx#1p\addtoLaTeXparams{p}\fi
  \fi

  \ifx#1X\BoxFrametrue\fi
  \ifx#1O\OverFrametrue\fi
  \ifx#1U\UnderFrametrue\fi
  \ifx#1w
    \ifnum\draft=1\wasdrafttrue\else\wasdraftfalse\fi
    \draft=\@ne
  \fi
  \let\next=\readFRAMEparams
  \fi
 \next
 }%
\def\IFRAME#1#2#3#4#5#6{%
      \bgroup
      \let\QCTOptA\empty
      \let\QCTOptB\empty
      \let\QCBOptA\empty
      \let\QCBOptB\empty
      #6%
      \parindent=0pt%
      \leftskip=0pt
      \rightskip=0pt
      \setbox0 = \hbox{\QCBOptA}%
      \@tempdima = #1\relax
      \ifOverFrame
          \typeout{This is not implemented yet}%
          \show\HELP
      \else
         \ifdim\wd0>\@tempdima
            \advance\@tempdima by \@tempdima
            \ifdim\wd0 >\@tempdima
               \textwidth=\@tempdima
               \setbox1 =\vbox{%
                  \noindent\hbox to \@tempdima{\hfill\GRAPHIC{#5}{#4}{#1}{#2}{#3}\hfill}\\%
                  \noindent\hbox to \@tempdima{\parbox[b]{\@tempdima}{\QCBOptA}}%
               }%
               \wd1=\@tempdima
            \else
               \textwidth=\wd0
               \setbox1 =\vbox{%
                 \noindent\hbox to \wd0{\hfill\GRAPHIC{#5}{#4}{#1}{#2}{#3}\hfill}\\%
                 \noindent\hbox{\QCBOptA}%
               }%
               \wd1=\wd0
            \fi
         \else
            \ifdim\wd0>0pt
              \hsize=\@tempdima
              \setbox1 =\vbox{%
                \unskip\GRAPHIC{#5}{#4}{#1}{#2}{0pt}%
                \break
                \unskip\hbox to \@tempdima{\hfill \QCBOptA\hfill}%
              }%
              \wd1=\@tempdima
           \else
              \hsize=\@tempdima
              \setbox1 =\vbox{%
                \unskip\GRAPHIC{#5}{#4}{#1}{#2}{0pt}%
              }%
              \wd1=\@tempdima
           \fi
         \fi
         \@tempdimb=\ht1
         \advance\@tempdimb by \dp1
         \advance\@tempdimb by -#2%
         \advance\@tempdimb by #3%
         \leavevmode
         \raise -\@tempdimb \hbox{\box1}%
      \fi
      \egroup%
}%
\def\DFRAME#1#2#3#4#5{%
 \begin{center}
     \let\QCTOptA\empty
     \let\QCTOptB\empty
     \let\QCBOptA\empty
     \let\QCBOptB\empty
     \ifOverFrame
        #5\QCTOptA\par
     \fi
     \GRAPHIC{#4}{#3}{#1}{#2}{\z@}
     \ifUnderFrame
        \nobreak\par #5\QCBOptA
     \fi
 \end{center}%
 }%
\def\FFRAME#1#2#3#4#5#6#7{%
 \begin{figure}[#1]%
  \let\QCTOptA\empty
  \let\QCTOptB\empty
  \let\QCBOptA\empty
  \let\QCBOptB\empty
  \ifOverFrame
    #4
    \ifx\QCTOptA\empty
    \else
      \ifx\QCTOptB\empty
        \caption{\QCTOptA}%
      \else
        \caption[\QCTOptB]{\QCTOptA}%
      \fi
    \fi
    \ifUnderFrame\else
      \label{#5}%
    \fi
  \else
    \UnderFrametrue%
  \fi
  \begin{center}\GRAPHIC{#7}{#6}{#2}{#3}{\z@}\end{center}%
  \ifUnderFrame
    #4
    \ifx\QCBOptA\empty
      \caption{}%
    \else
      \ifx\QCBOptB\empty
        \caption{\QCBOptA}%
      \else
        \caption[\QCBOptB]{\QCBOptA}%
      \fi
    \fi
    \label{#5}%
  \fi
  \end{figure}%
 }%
\def\makeactives{
  \catcode`\"=\active
  \catcode`\;=\active
  \catcode`\:=\active
  \catcode`\'=\active
  \catcode`\~=\active
}
   \gdef\activesoff{%
      \def"{\string"}
      \def;{\string;}
      \def:{\string:}
      \def'{\string'}
      \def~{\string~}
    }
\def\FRAME#1#2#3#4#5#6#7#8{%
 \bgroup
 \@ifundefined{bbl@deactivate}{}{\activesoff}
 \ifnum\draft=\@ne
   \wasdrafttrue
 \else
   \wasdraftfalse%
 \fi
 \def\LaTeXparams{}%
 \dispkind=\z@
 \def\LaTeXparams{}%
 \doFRAMEparams{#1}%
 \ifnum\dispkind=\z@\IFRAME{#2}{#3}{#4}{#7}{#8}{#5}\else
  \ifnum\dispkind=\@ne\DFRAME{#2}{#3}{#7}{#8}{#5}\else
   \ifnum\dispkind=\tw@
    \edef\@tempa{\noexpand\FFRAME{\LaTeXparams}}%
    \@tempa{#2}{#3}{#5}{#6}{#7}{#8}%
    \fi
   \fi
  \fi
  \ifwasdraft\draft=1\else\draft=0\fi{}%
  \egroup
 }%
\def\TEXUX#1{"texux"}
\long\def\QQQ#1#2{%
     \long\expandafter\def\csname#1\endcsname{#2}}%
\long\def\QQA#1#2{}%
\def\QTR#1#2{{\csname#1\endcsname #2}}
\def\EXPAND#1[#2]#3{}%
\def\NOEXPAND#1[#2]#3{}%
\def\LaTeXparent#1{}%
\def\ChildStyles#1{}%
\def\ChildDefaults#1{}%
\def\QTagDef#1#2#3{}%
\def\QQfnmark#1{\footnotemark}
\def\makeatletter\input gnuindex.sty\makeatother\makeindex{\makeatletter\input gnuindex.sty\makeatother\makeindex}%
\def\initial#1{\bigbreak{\raggedright\large\bf #1}\kern 2\p@\penalty3000}}%
 \def\abstract{%
  \if@twocolumn
   \section*{Abstract (Not appropriate in this style!)}%
   \else \small
   \begin{center}{\bf Abstract\vspace{-.5em}\vspace{\z@}}\end{center}%
   \quotation
   \fi
  }%
   \def\registered{\relax\ifmmode{}\r@gistered
                    \else$\m@th\r@gistered$\fi}%
 \def\r@gistered{^{\ooalign
  {\hfil\raise.07ex\hbox{$\scriptstyle\rm\text{R}$}\hfil\crcr
  \mathhexbox20D}}}}{}%
\newdimen\theight
\def\Column{%
 \vadjust{\setbox\z@=\hbox{\scriptsize\quad\quad tcol}%
  \theight=\ht\z@\advance\theight by \dp\z@\advance\theight by \lineskip
  \kern -\theight \vbox to \theight{%
   \rightline{\rlap{\box\z@}}%
   \vss
   }%
  }%
 }%
\def\qed{%
 \ifhmode\unskip\nobreak\fi\ifmmode\ifinner\else\hskip5\p@\fi\fi
 \hbox{\hskip5\p@\vrule width4\p@ height6\p@ depth1.5\p@\hskip\p@}%
 }%
\def\miss{\hbox{\vrule height2\p@ width 2\p@ depth\z@}}%
\def\tcol#1{{\baselineskip=6\p@ \vcenter{#1}} \Column}  %
\def\newfmtname{LaTeX2e}
\def\chkcompat{%
   \if@compatibility
   \else
     \usepackage{latexsym}
   \fi
}
  \DeclareOldFontCommand{\rm}{\normalfont\rmfamily}{\mathrm}
  \DeclareOldFontCommand{\sf}{\normalfont\sffamily}{\mathsf}
  \DeclareOldFontCommand{\tt}{\normalfont\ttfamily}{\mathtt}
  \DeclareOldFontCommand{\bf}{\normalfont\bfseries}{\mathbf}
  \DeclareOldFontCommand{\it}{\normalfont\itshape}{\mathit}
  \DeclareOldFontCommand{\sl}{\normalfont\slshape}{\@nomath\sl}
  \DeclareOldFontCommand{\sc}{\normalfont\scshape}{\@nomath\sc}
\def\alpha{{\Greekmath 010B}}%
\def\beta{{\Greekmath 010C}}%
\def\gamma{{\Greekmath 010D}}%
\def\delta{{\Greekmath 010E}}%
\def\epsilon{{\Greekmath 010F}}%
\def\zeta{{\Greekmath 0110}}%
\def\eta{{\Greekmath 0111}}%
\def\theta{{\Greekmath 0112}}%
\def\iota{{\Greekmath 0113}}%
\def\kappa{{\Greekmath 0114}}%
\def\lambda{{\Greekmath 0115}}%
\def\mu{{\Greekmath 0116}}%
\def\nu{{\Greekmath 0117}}%
\def\xi{{\Greekmath 0118}}%
\def\pi{{\Greekmath 0119}}%
\def\rho{{\Greekmath 011A}}%
\def\sigma{{\Greekmath 011B}}%
\def\tau{{\Greekmath 011C}}%
\def\upsilon{{\Greekmath 011D}}%
\def\phi{{\Greekmath 011E}}%
\def\chi{{\Greekmath 011F}}%
\def\psi{{\Greekmath 0120}}%
\def\omega{{\Greekmath 0121}}%
\def\varepsilon{{\Greekmath 0122}}%
\def\vartheta{{\Greekmath 0123}}%
\def\varpi{{\Greekmath 0124}}%
\def\varrho{{\Greekmath 0125}}%
\def\varsigma{{\Greekmath 0126}}%
\def\varphi{{\Greekmath 0127}}%
\def\nabla{{\Greekmath 0272}}
\def\FindBoldGroup{%
   {\setbox0=\hbox{$\mathbf{x\global\edef\theboldgroup{\the\mathgroup}}$}}%
}
\def\Greekmath#1#2#3#4{%
    \if@compatibility
        \ifnum\mathgroup=\symbold
           \mathchoice{\mbox{\boldmath$\displaystyle\mathchar"#1#2#3#4$}}%
                      {\mbox{\boldmath$\textstyle\mathchar"#1#2#3#4$}}%
                      {\mbox{\boldmath$\scriptstyle\mathchar"#1#2#3#4$}}%
                      {\mbox{\boldmath$\scriptscriptstyle\mathchar"#1#2#3#4$}}%
        \else
           \mathchar"#1#2#3#4%
        \fi
    \else
        \FindBoldGroup
        \ifnum\mathgroup=\theboldgroup 
           \mathchoice{\mbox{\boldmath$\displaystyle\mathchar"#1#2#3#4$}}%
                      {\mbox{\boldmath$\textstyle\mathchar"#1#2#3#4$}}%
                      {\mbox{\boldmath$\scriptstyle\mathchar"#1#2#3#4$}}%
                      {\mbox{\boldmath$\scriptscriptstyle\mathchar"#1#2#3#4$}}%
        \else
           \mathchar"#1#2#3#4%
        \fi     	
	  \fi}
\newif\ifGreekBold  \GreekBoldfalse
\let\SAVEPBF=\pbf
\def\pbf{\GreekBoldtrue\SAVEPBF}%
  \newcounter{equationnumber}
  \def\mathletters{%
     \addtocounter{equation}{1}
     \edef\@currentlabel{\theequation}%
     \setcounter{equationnumber}{\c@equation}
     \setcounter{equation}{0}%
     \edef\theequation{\@currentlabel\noexpand\alph{equation}}%
  }
    \def\BibTeX{{\rm B\kern-.05em{\sc i\kern-.025em b}\kern-.08em
                 T\kern-.1667em\lower.7ex\hbox{E}\kern-.125emX}}}{}%
\def\AmS{{\protect\usefont{OMS}{cmsy}{m}{n}%
                A\kern-.1667em\lower.5ex\hbox{M}\kern-.125emS}}}{}%
\def\DN@{\def\next@}%
\def\eat@#1{}%
\let\DOTSI\relax
\def\RIfM@{\relax\ifmmode}%
\def\FN@{\futurelet\next}%
\def\iint{\DOTSI\intno@\tw@\FN@\ints@}%
\def\iiint{\DOTSI\intno@\thr@@\FN@\ints@}%
\def\iiiint{\DOTSI\intno@4 \FN@\ints@}%
\def\idotsint{\DOTSI\intno@\z@\FN@\ints@}%
\def\ints@{\findlimits@\ints@@}%
\newif\iflimtoken@
\newif\iflimits@
\def\findlimits@{\limtoken@true\ifx\next\limits\limits@true
 \else\ifx\next\nolimits\limits@false\else
 \limtoken@false\ifx\ilimits@\nolimits\limits@false\else
 \ifinner\limits@false\else\limits@true\fi\fi\fi\fi}%
\def\multint@{\int\ifnum\intno@=\z@\intdots@                          
 \else\intkern@\fi                                                    
 \ifnum\intno@>\tw@\int\intkern@\fi                                   
 \ifnum\intno@>\thr@@\int\intkern@\fi                                 
 \int}
\def\multintlimits@{\intop\ifnum\intno@=\z@\intdots@\else\intkern@\fi
 \ifnum\intno@>\tw@\intop\intkern@\fi
 \ifnum\intno@>\thr@@\intop\intkern@\fi\intop}%
\def\intic@{%
    \mathchoice{\hskip.5em}{\hskip.4em}{\hskip.4em}{\hskip.4em}}%
\def\negintic@{\mathchoice
 {\hskip-.5em}{\hskip-.4em}{\hskip-.4em}{\hskip-.4em}}%
\def\ints@@{\iflimtoken@                                              
 \def\ints@@@{\iflimits@\negintic@
   \mathop{\intic@\multintlimits@}\limits                             
  \else\multint@\nolimits\fi                                          
  \eat@}
 \else                                                                
 \def\ints@@@{\iflimits@\negintic@
  \mathop{\intic@\multintlimits@}\limits\else
  \multint@\nolimits\fi}\fi\ints@@@}%
\def\intkern@{\mathchoice{\!\!\!}{\!\!}{\!\!}{\!\!}}%
\def\plaincdots@{\mathinner{\cdotp\cdotp\cdotp}}%
\def\intdots@{\mathchoice{\plaincdots@}%
 {{\cdotp}\mkern1.5mu{\cdotp}\mkern1.5mu{\cdotp}}%
 {{\cdotp}\mkern1mu{\cdotp}\mkern1mu{\cdotp}}%
 {{\cdotp}\mkern1mu{\cdotp}\mkern1mu{\cdotp}}}%
\def\RIfM@{\relax\protect\ifmmode}
\def\text{\RIfM@\expandafter\text@\else\expandafter\mbox\fi}
\let\nfss@text\text
\def\text@#1{\mathchoice
   {\textdef@\displaystyle\f@size{#1}}%
   {\textdef@\textstyle\tf@size{\firstchoice@false #1}}%
   {\textdef@\textstyle\sf@size{\firstchoice@false #1}}%
   {\textdef@\textstyle \ssf@size{\firstchoice@false #1}}%
   \glb@settings}
\def\textdef@#1#2#3{\hbox{{%
                    \everymath{#1}%
                    \let\f@size#2\selectfont
                    #3}}}
\newif\iffirstchoice@
\def\Let@{\relax\iffalse{\fi\let\\=\cr\iffalse}\fi}%
\def\vspace@{\def\vspace##1{\crcr\noalign{\vskip##1\relax}}}%
\def\multilimits@{\bgroup\vspace@\Let@
 \baselineskip\fontdimen10 \scriptfont\tw@
 \advance\baselineskip\fontdimen12 \scriptfont\tw@
 \lineskip\thr@@\fontdimen8 \scriptfont\thr@@
 \lineskiplimit\lineskip
 \vbox\bgroup\ialign\bgroup\hfil$\m@th\scriptstyle{##}$\hfil\crcr}%
\def\Sb{_\multilimits@}%
\def\endSb{\crcr\egroup\egroup\egroup}%
\def\Sp{^\multilimits@}%
\newdimen\ex@
\def\rightarrowfill@#1{$#1\m@th\mathord-\mkern-6mu\cleaders
 \hbox{$#1\mkern-2mu\mathord-\mkern-2mu$}\hfill
 \mkern-6mu\mathord\rightarrow$}%
\def\leftarrowfill@#1{$#1\m@th\mathord\leftarrow\mkern-6mu\cleaders
 \hbox{$#1\mkern-2mu\mathord-\mkern-2mu$}\hfill\mkern-6mu\mathord-$}%
\def\leftrightarrowfill@#1{$#1\m@th\mathord\leftarrow
\mkern-6mu\cleaders
 \hbox{$#1\mkern-2mu\mathord-\mkern-2mu$}\hfill
 \mkern-6mu\mathord\rightarrow$}%
\def\overrightarrow{\mathpalette\overrightarrow@}%
\def\overrightarrow@#1#2{\vbox{\ialign{##\crcr\rightarrowfill@#1\crcr
 \noalign{\kern-\ex@\nointerlineskip}$\m@th\hfil#1#2\hfil$\crcr}}}%
\def\overleftarrow{\mathpalette\overleftarrow@}%
\def\overleftarrow@#1#2{\vbox{\ialign{##\crcr\leftarrowfill@#1\crcr
 \noalign{\kern-\ex@\nointerlineskip}$\m@th\hfil#1#2\hfil$\crcr}}}%
\def\overleftrightarrow{\mathpalette\overleftrightarrow@}%
\def\overleftrightarrow@#1#2{\vbox{\ialign{##\crcr
   \leftrightarrowfill@#1\crcr
 \noalign{\kern-\ex@\nointerlineskip}$\m@th\hfil#1#2\hfil$\crcr}}}%
\def\underrightarrow{\mathpalette\underrightarrow@}%
\def\underrightarrow@#1#2{\vtop{\ialign{##\crcr$\m@th\hfil#1#2\hfil
  $\crcr\noalign{\nointerlineskip}\rightarrowfill@#1\crcr}}}%
\def\underleftarrow{\mathpalette\underleftarrow@}%
\def\underleftarrow@#1#2{\vtop{\ialign{##\crcr$\m@th\hfil#1#2\hfil
  $\crcr\noalign{\nointerlineskip}\leftarrowfill@#1\crcr}}}%
\def\underleftrightarrow{\mathpalette\underleftrightarrow@}%
\def\underleftrightarrow@#1#2{\vtop{\ialign{##\crcr$\m@th
  \hfil#1#2\hfil$\crcr
 \noalign{\nointerlineskip}\leftrightarrowfill@#1\crcr}}}%
\def\qopnamewl@#1{\mathop{\operator@font#1}\nlimits@}
\let\nlimits@\displaylimits
\def\setboxz@h{\setbox\z@\hbox}
\def\varlim@#1#2{\mathop{\vtop{\ialign{##\crcr
 \hfil$#1\m@th\operator@font lim$\hfil\crcr
 \noalign{\nointerlineskip}#2#1\crcr
 \noalign{\nointerlineskip\kern-\ex@}\crcr}}}}
 \def\rightarrowfill@#1{\m@th\setboxz@h{$#1-$}\ht\z@\z@
  $#1\copy\z@\mkern-6mu\cleaders
  \hbox{$#1\mkern-2mu\box\z@\mkern-2mu$}\hfill
  \mkern-6mu\mathord\rightarrow$}
\def\leftarrowfill@#1{\m@th\setboxz@h{$#1-$}\ht\z@\z@
  $#1\mathord\leftarrow\mkern-6mu\cleaders
  \hbox{$#1\mkern-2mu\copy\z@\mkern-2mu$}\hfill
  \mkern-6mu\box\z@$}
\def\projlim{\qopnamewl@{proj\,lim}}
\def\injlim{\qopnamewl@{inj\,lim}}
\def\varinjlim{\mathpalette\varlim@\rightarrowfill@}
\def\varprojlim{\mathpalette\varlim@\leftarrowfill@}
\def\varliminf{\mathpalette\varliminf@{}}
\def\varliminf@#1{\mathop{\underline{\vrule\@depth.2\ex@\@width\z@
   \hbox{$#1\m@th\operator@font lim$}}}}
\def\varlimsup{\mathpalette\varlimsup@{}}
\def\varlimsup@#1{\mathop{\overline
  {\hbox{$#1\m@th\operator@font lim$}}}}
\def\align{\@verbatim \frenchspacing\@vobeyspaces \@alignverbatim
You are using the "align" environment in a style in which it is not defined.}
\let\csname endalign*\endcsname =\endtrivlist
\def\alignat{\@verbatim \frenchspacing\@vobeyspaces \@alignatverbatim
You are using the "alignat" environment in a style in which it is not defined.}
\let\csname endalignat*\endcsname =\endtrivlist
\def\xalignat{\@verbatim \frenchspacing\@vobeyspaces \@xalignatverbatim
You are using the "xalignat" environment in a style in which it is not defined.}
\let\csname endxalignat*\endcsname =\endtrivlist
\def\gather{\@verbatim \frenchspacing\@vobeyspaces \@gatherverbatim
You are using the "gather" environment in a style in which it is not defined.}
\let\csname endgather*\endcsname =\endtrivlist
\def\multiline{\@verbatim \frenchspacing\@vobeyspaces \@multilineverbatim
You are using the "multiline" environment in a style in which it is not defined.}
\let\csname endmultiline*\endcsname =\endtrivlist
\def\arrax{\@verbatim \frenchspacing\@vobeyspaces \@arraxverbatim
You are using a type of "array" construct that is only allowed in AmS-LaTeX.}
\def\tabulax{\@verbatim \frenchspacing\@vobeyspaces \@tabulaxverbatim
You are using a type of "tabular" construct that is only allowed in AmS-LaTeX.}
\let\csname endarrax*\endcsname =\endtrivlist
\let\csname endtabulax*\endcsname =\endtrivlist
\def\@@eqncr{\let\@tempa\relax
    \ifcase\@eqcnt \def\@tempa{& & &}\or \def\@tempa{& &}%
      \else \def\@tempa{&}\fi
     \@tempa
     \if@eqnsw
        \iftag@
           \@taggnum
        \else
           \@eqnnum\stepcounter{equation}%
        \fi
     \fi
     \global\tag@false
     \global\@eqnswtrue
     \global\@eqcnt\z@\cr}
 \def\endequation{%
     \ifmmode\ifinner 
      \iftag@
        \addtocounter{equation}{-1} 
        $\hfil
           \displaywidth\linewidth\@taggnum\egroup \endtrivlist
        \global\tag@false
        \global\@ignoretrue
      \else
        $\hfil
           \displaywidth\linewidth\@eqnnum\egroup \endtrivlist
        \global\tag@false
        \global\@ignoretrue
      \fi
     \else
      \iftag@
        \addtocounter{equation}{-1} 
        \eqno \hbox{\@taggnum}
        \global\tag@false%
        $$\global\@ignoretrue
      \else
        \eqno \hbox{\@eqnnum}
        $$\global\@ignoretrue
      \fi
     \fi\fi
 }
 \newif\iftag@ \tag@false
 \def\tag{\@ifnextchar*{\@tagstar}{\@tag}}
 \def\@tag#1{%
     \global\tag@true
     \global\def\@taggnum{(#1)}}
 \def\@tagstar*#1{%
     \global\tag@true
     \global\def\@taggnum{#1}%
}
\begin{document}
\title[The kinematic formula]{The kinematic formula in the 3D-Heisenberg group} 
\author[Y.-C. Huang]{Yen-Chang Huang}
\address{Department of Mathematics, Xiamen University, Malaysia}
\email{ychuang@xmu.edu.my}

\begin{abstract} By studying the group of rigid motions, $PSH(1)$, in the 3D-Heisenberg group $H_1$, we define the density and the measure for the sets of horizontal lines. We show that the volume of a convex domain $D\subset H_1$ is equal to the integral of length of chord over all horizontal lines intersecting $D$. As the classical result in integral geometry, we also define the kinematic density for $PSH(1)$ and show the probability of randomly throwing a vector $v$ interesting the convex domain $D\subset D_0$ under the condition that $v$ is contained in $D_0$. Both results show the relationship connecting the geometric probability and the natural geometric quantity in \cite{CHMY1} approached by the variational method.
\end{abstract}

\maketitle

\section{Introduction}
We adapt the methods of integral geometry to study the 3D-Heisenberg group which is a non-compact CR manifold with zero Tanaka-Webster torsion and zero Tanaka-Webster curvature. After the brief history of integral geometry and the introduction of CR geometry, we will illustrate how our formulas show the relationship connecting these fields.

The roots of integral geometry may date back from geometric probability and the study of invariant measures by integration techniques, which consider the probability of random geometric objects interacting to each other under a group of transformations, for example, Buffon's needle problem and Bertrand's paradox. In the late nineteenth and early twentieth century, a variety of problems in geometric probability arose and lead to systematical studies in this field. Works of Crofton, Poincar\'e, Sylvester and others build up the foundation of integral geometry. A series of articles related to the developments of geometric probability in this age was elaborated by Maran \cite{Moran1, Moran2}, Little \cite{Little}, and Baddeley \cite{Baddeley}. When the concept of invariant measure had became clear, Wilhelm Blaschke \cite{Blaschke} and his school initiated integral geometry. Santal\'o's book \cite{San} has been one of the most important books on the subject; Howard's book \cite{Howard} deals with the cases in Riemannian geometry; Zhou \cite{Zhou2, ZL} derives several integral formulas for submanifolds in Riemannian homogeneous spaces; The book of Schneider and Weil \cite{SW} includes the fundamental knowledge of integral geometry and recent development of integral and stochastic geometry. Due to page restriction, we refer the delicate surveys \cite{Zhang} \cite{Zhou1}.

CR geometry studies the geometry of the boundary of a smooth strictly pseudo-convex domain in $\mathbb{C}^n$, and then develops to the abstract CR manifolds. The foundational work was built by Chern-Moser \cite{CM} in 1974, and a closely connected work is given by Webster \cite{Webster} and Tanaka \cite{Tanaka} independently; they introduced the pseudohermitian geometry where a connection is given associated to a choice of a contact form and curvature invariants. We point out that the relationship between CR geometry and pseudohermitian geometry have the strong analogy to that between conformal geometry and Riemannian geometry \cite{BFG}. The introductory surveys, emphasizing recent development of three dimensional pseudohermitian geometry, can be found in \cite{Yang2, Yang1}.

Next we give the background of our studying target. The 3D-Heisenberg group $H_1$ is the Euclidean space $\mathbb{R}^3$ with a group multiplication (see Section \ref{Preliminary}). A rigid motion (called a \textit{pseudohermitian transformation}) in $H_1$ is a diffeomorphism $\Phi$ defined on $H_1$ preserving the CR structure $J$ and the contact form $\Theta$, namely,
\begin{align*}
\Phi_{*} J&=J \Phi_* \text{ on the contact plane } \xi,  \\  \Phi^{*} \Theta &= \Theta \text{ in } H_1.
\end{align*}
Denote the group of pseudohermitians by $PSH(1)$. Similar to the usual group of translations in $\mathbb{R}^3$, in \cite{CHL} we show that $PSH(1)$ consists of left-translations $L_Q$ and rotations $R_\alpha\in SO(2)$, (as shown (\ref{multi})). The understanding of the structure of $PSH(1)$ plays the important role when using the method of moving frames (see equations (\ref{multi})(\ref{changecoo}). Let $\Sigma$ be an embedded surface in $H_1$ and denote $S_\Sigma$ the set of points on $\Sigma$ where the contact plane $\xi$ is tangent to the tangent plane $T\Sigma$ of the surface. Given an orthonormal basis $(e_1, e_2, T)$ defined on $\Sigma\setminus S_\Sigma$ w.r.t. the Levi metric, where $e_1\in T\Sigma\cap \xi$ on $S_\Sigma$ and $e_2=Je_1$, and $T$ is the Reeb vector field. Suppose $(\omega^1, \omega^2, \Theta)$ is the dual basis of $(e_1, e_2, T)$. To our purpose, two geometric quantities naturally following the geometry in $H_1$ should be introduced
\begin{align}
V(D)&=\frac{1}{2}\int_D \Theta\wedge d\Theta, \label{volume2}\\
\text{p-Area}(\Sigma)&=\int_\Sigma \Theta\wedge w^1, \label{parea}
\end{align} which are called, respectively, the \textit{volume} of the domain $D\subset H_1$ and the \textit{p-Area} of its boundary $\partial D=\Sigma$. The authors in \cite{CHMY1} show that the volume actually is the Lebesque volume in $\mathbb{R}^3$, while the p-area form $\Theta \wedge \omega^1$, resulted from the variation of volume $V$ along the Legendrian normal $e_2$, is globally defined on $\Sigma$ and vanishes on $S_\Sigma$. The similar notions of volume and area are also studied by \cite{RR1,RR2, FSC,CDG} when considering the $C^1$-surface $\Sigma$ enclosing a bounded set $D$; the area of $\Sigma$ coincides with the $H_1$-perimeter of of $D$. The authors adapt the usual normal vector perpendicular to the surface, while we consider the Legendrian normal which later on develops toward to the studies of umbilic hypersurfaces in higher dimensional Heisenberg Group $H_n$, see \cite{CCHY1}.

A \textit{horizontal line} is a straight line in $\mathbb{R}^3$ such that its velocity vector is always tangent to the contact plane. In the next section, we shall show that every horizontal line $G$ can be uniquely represented, up to an orientation, by three parameters $(p,\theta, t)$ (equivalently, by the base point $B\in G$). Denote by $G_{p,\theta,t}$ and therefore the three-form $dp\wedge d\theta\wedge dt$ is invariant under $PSH(1)$ (see (\ref{density})). Follow the classical integral geometry \cite{San}\cite{Chern}, we give the analogous definition of measure for a set of horizontal lines.

\begin{definition}\label{densityoflines}
The \textit{measure} of a set $X$ of horizontal lines $G_{p,\theta,t}$ is defined by the integral over the set $X$,
$$m(X)=\int_X dG,$$
where the diffeerential form $dG=dp\wedge d\theta \wedge dt$ is called the \textit{density} for sets of horizontal lines.
\end{definition}

Remark that up to a constant factor, this density is the only one that is invariant under motions of $PSH(1)$. In convention, the density will always be taken at absolute value.

Recall that the Crofton's formula: for a convex domain in $M^2\subset \mathbb{R}^2$ with the length $\ell$ of boundary $\partial M^2$, we have (\cite{San}, Chapter 4.2)
\begin{align*}
\int_{\forall G,\ G\cap D\neq \emptyset}dG= 2 \ell.
\end{align*}
By using the method of moving frame for the convex domain $D\subset H_1$ with boundary $\partial D=\Sigma$, we show the Crofton-type formula (\cite{CHL} Theorem 1.11)
\begin{align}\label{integraldg}
\int_{\forall G,\ G\cap D\neq \emptyset}dG= 2\cdot \text{p-Area}(\Sigma).
\end{align} Also, in the Euclidean plane (\cite{San}, Chapter 3), it shows that the area of a convex domain $D\subset \mathbb{R}^2$ is equal to the integral of length of chord $\sigma$ over all lines intersecting $D$,
\begin{align*}
\int_{\forall G,\ G\cap D\neq\emptyset} \sigma dG= \pi\cdot  \text{area}(D),
\end{align*} where $G$ stands for the line in $\mathbb{R}^2$, $dG=dp\wedge d\theta$ is the density of set of lines. Our first main result is the analogy in $H_1$:

\begin{theorem}\label{volume1}
Let $D$ be a convex domain with boundary $\Sigma:= \partial D$ in $H_1$. Suppose $\gamma(s)$ is an oriented horizontal line $G_{p,\theta, t}$ intersecting with $D$ by the length of chord $\sigma$ w.r.t. the Levi metric. Then
$$\int_{\forall G, \ G\cap D\neq \emptyset}\sigma dG = 2\pi V(D),$$ where $dG= dp\wedge d\theta \wedge dt$ is the density of the horizontal lines and $V(D)$ is the Lebesque volume of $D$ in $\mathbb{R}^3$.
\end{theorem}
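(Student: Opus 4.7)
The plan is to convert $\int \sigma\, dG$ into a four-fold integral over $H_1\times S^1$ via an explicit change of variables; once the relevant Jacobian is computed, Fubini delivers the stated identity. This is the direct analogue of the classical proof of Cauchy's formula $\int \sigma\,dG = \pi\cdot\text{area}(D)$ in the plane. Two ingredients are needed: an arc-length parametrization $\gamma(s)$ of $G_{p,\theta,t}$ with respect to the Levi metric, and the identification (from \cite{CHMY1}) of $V(D) = \frac12\int_D \Theta\wedge d\Theta$ with the Lebesgue volume on $\mathbb{R}^3$.

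First, I would recall from \cite{CHL} (or derive directly from the horizontal frame $e_1 = \partial_x + y\partial_z$, $e_2 = \partial_y - x\partial_z$) that a horizontal line of angle $\theta$, with signed $xy$-perpendicular distance $p$ from the origin and height parameter $t$, admits the parametrization
\begin{equation*}
\gamma(s) = \bigl(-p\sin\theta + s\cos\theta,\ p\cos\theta + s\sin\theta,\ t + ps\bigr),
\end{equation*}
in which $s$ is the Levi arc length. I then consider the smooth map
\begin{equation*}
\Phi\colon (p,\theta,t,s)\longmapsto (\gamma(s),\theta)\in H_1\times S^1
\end{equation*}
and compute $\Phi^{*}(dx\wedge dy\wedge dz\wedge d\theta)$. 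Using $dz = dt + p\,ds + s\,dp$ together with the exterior algebra, a direct calculation collapses all off-diagonal contributions and yields
\begin{equation*}
\Phi^{*}\bigl(dx\wedge dy\wedge dz\wedge d\theta\bigr) = dp\wedge d\theta\wedge dt\wedge ds,
\end{equation*}
which is precisely $dG\wedge ds = dV\wedge d\theta$ with $dV$ the Lebesgue volume.

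With this key identity in hand, convexity of $D$ in $\mathbb{R}^3$ guarantees that every horizontal line meets $D$ in at most a single chord, so by Fubini
\begin{equation*}
\int_{G\cap D\neq\emptyset}\sigma(G)\,dG = \int_{\{(p,\theta,t,s)\,:\,\gamma(s)\in D\}} dp\wedge d\theta\wedge dt\wedge ds = \int_D\int_0^{2\pi}d\theta\,dV = 2\pi V(D),
\end{equation*}
the factor $2\pi$ arising from the circle of oriented horizontal directions through each interior point. The main obstacle is the Jacobian computation itself: it is routine but requires care with signs and with the order of differentials so that after cancellation only the $dp\wedge d\theta\wedge dt\wedge ds$ term survives. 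Everything else---convexity, the coincidence of $V$ with Lebesgue volume, and the $PSH(1)$-invariance of $dG$ already established in Definition \ref{densityoflines}---is available from the paper and its references.
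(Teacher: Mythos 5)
Your proposal is correct, and the computation it rests on checks out: with your parametrization the velocity is $\gamma'(s)=\cos\theta\,\mathring{e}_1+\sin\theta\,\mathring{e}_2$, so $s$ really is arc length for the Levi metric, and a direct expansion confirms $\Phi^{*}(dx\wedge dy\wedge dz\wedge d\theta)=\pm\, dp\wedge d\theta\wedge dt\wedge ds$, after which Fubini gives the factor $2\pi$. The underlying change of variables is the same one the paper performs, but your organization of it is genuinely different and arguably cleaner. The paper fixes $(p,\theta)$, introduces the vertical cross-sectional plane $S_{p,\theta}$ over $\pi(G)$ parametrized by $(u,v)=(x,t)$, and invokes Lemma \ref{equivalent} (Lemma 8.7 of \cite{CHL}) to identify the horizontal direction field $E=-\mathring{e}_1-y'\,\mathring{e}_2$ on that plane, together with Lemma \ref{elength} to justify measuring its length in a single contact plane; this produces $\sigma=\int_{I_1}|\csc\theta|\,du$, and the final step substitutes $dp=\cos\theta\,dx+\sin\theta\,dy$ to land on $dx\wedge dy\wedge dt\wedge d\theta$. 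Because the chord is there parametrized by $x$ rather than by arc length, the angles $\theta=0,\pi$ must be excluded and treated separately (the paper disposes of them by noting $dG=0$ on that set). Your single $4\times 4$ Jacobian in the variables $(p,\theta,t,s)$ does both steps at once, needs no auxiliary lemma about contact directions on a parametrized surface, and has no singular angles, since $\{\theta=0,\pi\}$ is already a null set for $d\theta$. The only point I would ask you to make explicit is the bijectivity of $\Phi$ onto $H_1\times S^1$ (for fixed $\theta$ the map $(p,t,s)\mapsto(x,y,z)$ is an affine isomorphism), which is what licenses the appeal to Fubini; everything else is complete.
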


Once we have defined the measure of sets of lines, the \textit{probability} (\cite{San}, Chapter 2) that a random line is in the set $X$ when it is known to be in the bigger set $Y$ containing $X$ can be defined by the quotient of the measures
\begin{align}\label{conditional}
P(G\in X|G\in Y)=\frac{m(G\cap X\neq\emptyset)}{m(G\cap Y\neq\emptyset)}.
\end{align}
Thsu, by the conditional probability $(\ref{conditional})$ and (\ref{integraldg}), we immediately have the corollaries:

\begin{cor}
Given a convex 3-domain $D$ in $H_1$ with boundary $\partial D=\Sigma$, and randomly throw an oriented horizontal line $G$ interesting $D$ once a time with chord length $\sigma$. The average chord length of the lines interesting $D$ is
$$\frac{m(\sigma; G\cap D \neq \emptyset)}{m(G;G\cap D\neq \emptyset)}=\frac{\int_{G\cap D\neq \emptyset}\sigma dG}{\int_{G\cap D\neq \emptyset}dG}=\frac{2\pi V(D)}{2\cdot \text{p-Area}(\Sigma)}.$$
\end{cor}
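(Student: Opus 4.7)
The plan is to recognize the stated equality as the immediate composite of two integral formulas already established in the excerpt, interpreted through the probabilistic framework of equation (\ref{conditional}). Since both the numerator and the denominator are named quantities from earlier results, no new geometric input is required.

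First I would observe that, once the density $dG = dp \wedge d\theta \wedge dt$ has been fixed as the (up to constant) unique $PSH(1)$-invariant density on the space of horizontal lines, the average of any measurable function $f(G)$ over the set of horizontal lines meeting $D$ is by construction
$$\overline{f} \;=\; \frac{\int_{G\cap D\neq\emptyset} f(G)\, dG}{\int_{G\cap D\neq\emptyset} dG}.$$
Specializing $f(G)=\sigma(G)$, the Levi-length of the chord $G\cap D$, yields the first equality of the corollary by definition of the conditional measure (\ref{conditional}) applied to a random oriented horizontal line constrained to hit $D$.

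Next I would substitute the two known integral identities. The numerator equals $2\pi V(D)$ by Theorem \ref{volume1}, and the denominator equals $2\cdot\text{p-Area}(\Sigma)$ by the Crofton-type identity (\ref{integraldg}) quoted from \cite{CHL}. Dividing gives the second equality, which is the asserted average.

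The main point requiring care, rather than a genuine obstacle, is bookkeeping of orientations and absolute values: lines are counted up to orientation with the density taken at absolute value (as fixed after Definition \ref{densityoflines}), and the chord length $\sigma$ must be the Levi-metric length used in Theorem \ref{volume1}. Once these conventions are aligned on both sides of the quotient, the corollary follows with no further computation.
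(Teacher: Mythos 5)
Your proposal is correct and matches the paper's own reasoning: the corollary is obtained immediately by dividing the identity $\int_{G\cap D\neq\emptyset}\sigma\, dG = 2\pi V(D)$ of Theorem \ref{volume1} by the Crofton-type formula (\ref{integraldg}), interpreted via the conditional measure (\ref{conditional}). No further argument is needed.
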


Because of the Lie group structure of $H_1$, we can define the the moving frame $(Q; e_1(Q),e_2(Q),T)$ at the point $Q$ by moving the standard frame $(O;\mathring{e}_1(O), \mathring{e}_2(O), T)$ (defined in (\ref{standard})) at the original $O$ along the horizontal line $G$ under translations in $PSH(1)$ (see Section \ref{Preliminary}). Note that $span\{e_1(Q), e_2(Q)\}=\xi_Q$ for any $Q\in H_1$. There exists an one-to-one correspondence associated to $PSH(1)$ and the moving frames, and thus any element in $PSH(1)$ is uniquely parametrized by four variables $(a,b,c,\phi)$, where the point $Q=(a,b,c)$ dominates the position of the frame, $\phi$ the rotation on $\xi_Q$. Moreove, we shall show the following four-form is invariant under $PSH(1)$, and hence one defines it as the kinematic density of $PSH(1)$.
\begin{definition}The invariant $4$-form
\begin{align}\label{kdensity}
dK:=da\wedge db\wedge dc\wedge d\phi 
\end{align} is called the \textit{kinematic density}    for the group of motions $PSH(1)$ in $H_1$,
where $\phi$ is the angle from the standard vector $\mathring{e}_1(Q)$ to the frame vector $e_1(Q)$, $Q=(a,b,c)$.
\end{definition} 
We shall also show that the invariant volume element of $PSH(1)$ has the other expression
\begin{align}\label{kdensity2}
dK= dG\wedge dh,
\end{align}where $h$ is the oriented distance from the base point $B\in G$ to $Q$ w.r.t. the Levi-metric, $dG$ the density in Definition \ref{densityoflines}.

By integrating $dK$ over a domain in $PSH(1)$, one gets the measure $m$ of the corresponding set of motions (as \cite{San}, called the \textit{kinematic measure}). This kinematic measure is actually somewhat a unexpected geometric quantity, p-Area, so it is worth to believe that the suitable approach can help us understand CR (pseudohermitian) geometry from the viewpoint of integral geometry. The following result gives an evidence of the connections between these two fields.

\begin{theorem}\label{randomm}
Given a convex 3-domain $D$ in $H_1$ with boundary $\partial D=\Sigma$. Randomly throw an oriented line segment $v$ (may not be necessarily horizontal) with length $\ell$ w.r.t. the Levi metric (Fingure \ref{Fig3}). The measure of random line segments intersecting the domain $D$ is
\begin{align}\label{throwing}
m(v; v\cap D\neq \emptyset):=\int_{v\cap D\neq \emptyset}dK=2\pi V(D)+2 \ell  \cdot \text{p-area}(\Sigma),
\end{align}
where $dK$ is the kinematic density, $V(D)$ the Lebseque volume of $D$, and p-Area($\Sigma$) the p-area of the surface $\Sigma$.
\end{theorem}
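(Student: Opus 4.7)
The plan is to evaluate the kinematic integral $\int_{v\cap D\neq\emptyset}dK$ by exploiting the factorization $dK = dG\wedge dh$ of (\ref{kdensity2}), which reduces the 4-dimensional integration over $PSH(1)$ to an iterated integral: first over horizontal lines $G$ (3 parameters), then over the signed Levi-distance $h$ along $G$ (1 parameter). Geometrically, fixing $G$ and varying $h$ amounts to sliding the base point $Q_G(h)$ of the moving frame along $G$ and carrying the attached segment $v$ along with it; horizontal lines arise naturally because $e_1(Q)$ is tangent to $G$ at $Q=Q_G(h)$.

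Consider first the cleanest case, where $v$ is attached to the frame along the direction $\mathring{e}_1$. Then at each parameter $h$ the moved segment is a Levi-length-$\ell$ sub-segment of $G$ based at $Q_G(h)$, so $v\cap D\neq\emptyset$ iff this sub-segment overlaps the chord $G\cap D$, of Levi-length $\sigma(G)$. Since $D$ is convex, the chord is a single interval, and a direct one-dimensional check along $G$ shows that the admissible set of $h$ is an interval of length exactly $\sigma(G)+\ell$ (empty when $G\cap D=\emptyset$, so one may restrict the outer integral to $G\cap D\neq\emptyset$). Splitting the iterated integral then gives
\begin{equation*}
\int_{v\cap D\neq\emptyset}dK \;=\; \int_{G\cap D\neq\emptyset}\bigl(\sigma(G)+\ell\bigr)\,dG \;=\; \int_{G\cap D\neq\emptyset}\sigma(G)\,dG \;+\; \ell\int_{G\cap D\neq\emptyset}dG,
\end{equation*}
and the two pieces are delivered respectively by Theorem \ref{volume1} (contributing $2\pi V(D)$) and by formula (\ref{integraldg}) (contributing $2\,\text{p-Area}(\Sigma)$), producing the claimed sum $2\pi V(D)+2\ell\cdot\text{p-Area}(\Sigma)$.

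The main obstacle I anticipate is handling attachment directions $u_0$ other than $\mathring{e}_1$, particularly those with a non-zero $T$-component, reflecting the statement's ``not necessarily horizontal'' clause. Because the Heisenberg left-translation $L_Q$ acts affinely rather than by pure Euclidean translation on $\mathbb{R}^3$, as $h$ varies the moved segment does not translate rigidly along $G$: a short computation shows its endpoint acquires an extra vertical velocity proportional to the $\mathring{e}_2$-component of $u_0$. Nevertheless, the map $(t,h)\mapsto Q_G(h)+t\,u^{\rm frame}(h)$ remains affine in both parameters, so for convex $D$ the admissible $h$-set is still an interval; what must be checked is that its length still averages, after integration against $dG$, to $\sigma(G)+\ell$ regardless of $u_0$. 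This invariance should follow by absorbing the vertical drift into a change of variables (using that $L_Q$ has unit Jacobian and preserves Lebesgue measure), or alternatively by computing the same integral through the decomposition $dK = dQ\wedge d\phi$ and matching via Fubini. With this reduction in hand, the aligned computation above delivers the theorem in full generality.
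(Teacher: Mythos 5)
Your proposal follows essentially the same route as the paper: factor $dK=dG\wedge dh$, observe that for a fixed horizontal line $G$ the admissible values of $h$ form an interval of Levi-length $\sigma+\ell$, and then split $\int_{G\cap D\neq\emptyset}(\sigma+\ell)\,dG$ into the two pieces supplied by Theorem \ref{volume1} and formula (\ref{integraldg}). The subtlety you flag about non-horizontal (or non-aligned) attachment directions is genuine, but the paper's own proof does not address it either --- it simply slides $v$ along a horizontal line ``along the vector $v$'' --- so on that point you are, if anything, more careful than the source.
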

\begin{figure}[!ht]
   \includegraphics[width=0.8 \textwidth]{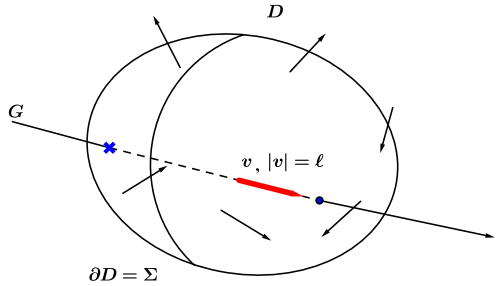}
   \centering \caption{Random intersections of vector $v$}
    \label{Fig3}
\end{figure}
\begin{remark}
We take for granted that $v\cap D\neq\emptyset$ in the sense of $v\cap D\neq   \emptyset$ or $v\cap \Sigma \neq\emptyset$. When restricted our attention to $v\subset D$ only, the equation $(\ref{throwing})$ becomes
$$\int_{G\cap D\neq \emptyset, v\subset D} (\sigma-\ell) dG$$
and the measure of $v\subset D$ is
$$m(v;v\subset D)=2\pi V(D)-2\ell \cdot \text{p-Area}(\Sigma).$$
\end{remark}

With the measure, we immediately have the probability of containment problem.
\begin{cor}
Let $D_i$ be convex 3-domains with boundary $\partial D_i=\Sigma_i$ for $i=1,2$, such that $D_1\subset D_2$. The probability of randomly throwing a vector $v$ with length $\ell$ w.r.t. Levi metric in $D_2$ intersecting $D_1$ is
$$P(v\cap D_1|v\ \cap D_2)=\frac{2\pi V(D_1)+2\ell\cdot \mbox{ p-Area}(\Sigma_1)}{2\pi V(D_2)+2\ell\cdot\mbox{ p-Area}(\Sigma_2)}.$$
\end{cor}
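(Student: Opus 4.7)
The plan is to exploit the decomposition $dK = dG\wedge dh$ from (\ref{kdensity2}), which will reduce the problem, by Fubini, to the Crofton-type formula (\ref{integraldg}) together with Theorem \ref{volume1}. Under the kinematic parametrization, each frame $(Q;e_1(Q),e_2(Q),T)$ singles out an oriented horizontal line $G=G_{p,\theta,t}$ — namely the horizontal line through $Q$ with velocity $e_1(Q)$ — and the random vector $v$ of Levi length $\ell$ is naturally the segment of $G$ of length $\ell$ starting at $Q$. In this correspondence $v\leftrightarrow(G,h)$ the parameter $h$ is the oriented Levi distance from the base point $B\in G$ to $Q$, which is exactly the pair whose joint density (\ref{kdensity2}) identifies with $dK$.

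Now I would fix an oriented horizontal line $G$ that meets $D$. Because $D$ is convex, $G\cap D$ is a single closed interval, corresponding to parameters $h\in[h_1,h_2]$ with chord length $\sigma=h_2-h_1$. The vector $v$ starting at parameter $h$ occupies the $h$-interval $[h,h+\ell]$, so $v\cap D\neq\emptyset$ if and only if $[h,h+\ell]\cap[h_1,h_2]\neq\emptyset$, i.e.\ $h_1-\ell\le h\le h_2$. For each such $G$ the admissible slice of starting points therefore has Levi-length $\sigma+\ell$.

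Combining the two observations and applying the previously proved formulas, one obtains
\begin{align*}
\int_{v\cap D\neq\emptyset}dK
&=\int_{G\cap D\neq\emptyset}\left(\int_{h_1-\ell}^{h_2}dh\right)dG
=\int_{G\cap D\neq\emptyset}(\sigma+\ell)\,dG\\
&=\int_{G\cap D\neq\emptyset}\sigma\,dG+\ell\int_{G\cap D\neq\emptyset}dG
=2\pi V(D)+2\ell\cdot\text{p-Area}(\Sigma),
\end{align*}
which is the claimed identity.

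The main technical point — and, I expect, the only real obstacle — is the identification $v\leftrightarrow (G,h)$: one must confirm that the segment produced from the frame $(Q;e_1(Q),e_2(Q),T)$ by travelling Levi distance $\ell$ in the $e_1(Q)$-direction really lies on the horizontal line $G$ (a direct check from the explicit form of $e_1(Q)$ after left-translation by $Q$), and that the parenthetical ``may not be necessarily horizontal'' is compatible with this reading. Once this identification is set, convexity forces a single chord and the slicing $dK=dG\wedge dh$ turns the claim into a one-line Fubini computation. As a sanity check, the same argument with ``$v\subset D$'' in place of ``$v\cap D\neq\emptyset$'' replaces $\sigma+\ell$ by $\sigma-\ell$ and reproduces the formula $m(v;v\subset D)=2\pi V(D)-2\ell\cdot\text{p-Area}(\Sigma)$ of the subsequent remark, lending further confidence to the approach.
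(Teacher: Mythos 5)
Your computation is correct and is essentially the paper's own route: what you have written out is precisely the paper's proof of Theorem \ref{randomm} (slice $dK=dG\wedge dh$, observe that for each line $G$ the admissible $h$-interval has Levi length $\sigma+\ell$, then invoke Theorem \ref{volume1} and the Crofton-type formula (\ref{integraldg})), and your reading of $v$ as a segment of the horizontal line $G$ matches the paper's. The only thing still missing for the corollary as stated is the one-line final step: apply the measure formula separately to $D_1$ and $D_2$ and form the quotient as in (\ref{conditional}), using $D_1\subset D_2$ to ensure $\{v:v\cap D_1\neq\emptyset\}\subset\{v:v\cap D_2\neq\emptyset\}$, which yields the claimed ratio.
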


\textbf{Acknowledgment.} Part of the contents in this article was completed during the author's postdoctoral at National Central University, Taiwan. I would like to thank my mentor, Prof. Hung-Lin Chiu, for his introduction and inspiration to this problem.

\section{Preliminary}\label{Preliminary}

For basic knowledge of the Heisenberg group, we refer \cite{CHMY1}(appendix), \cite{RR1}, and our previous work \cite{CHL}; also, \cite{CHMY2, Yang1, Yang2}. There are some works from sub-Riemannian approaches, e.g. \cite{CCG, RR1, CDPT, LM, RR2, CDG, FSC}.
The three dimensional Heisenberg group $H_1$ is $\mathbb{R}^3$, as a set, together with the group multiplication (left-invariant translation)
$$L_{(a,b,\tilde{t})}\circ (x,y,t) = (a+x,b+y, c+\tilde{t}+bx-ay).$$
$H^1$ is a 3-dimensional Lie group. Any left invariant vector field, at point $(x,y,t)$, is a linear combination of the following standard vector fields:
\begin{align}\label{standard}
\mathring{e}_1&=\frac{\partial}{\partial x}+y\frac{\partial}{\partial t}, \ \ \ \mathring{e}_2=\frac{\partial}{\partial y}-x\frac{\partial}{\partial t},\\
T&=\frac{\partial}{\partial t}.
\end{align}
The standard contact structure $\xi=span\{\mathring{e}_1, \mathring{e}_2$\} on $H_1$ is the subbundle of the tangent bundle $TH$; equivalently, we can also define $\xi$ to be the kernal of the standard contact form
$$d\Theta=dt+xdy-ydx.$$
The standard CR structure on $H_1$ is the almost complex structure $J$ defined on $\xi$ such that
\begin{align*}
J^2=- I, J(\mathring{e}_1)=\mathring{e}_2,\  J(\mathring{e}_2)=-\mathring{e}_1.
\end{align*} On $\xi$ a natural metric $$L_\Theta(X,Y)=\frac{1}{2}d\Theta(X, J Y)=\frac{1}{2}\left((dx)^2+(dy)^2\right),$$ called the \textit{Levi metric}, ; for $v\in \xi$ we define the length of $v$ by $|X|=\sqrt{<X,X>}$.

In our previous work \cite{CHL}, we show that any pseudo-hermitian transformation $\Phi_{Q,\alpha}$ can be represented by a left-invariant translation $L_Q$ for $Q=(a,b,c)\in \mathbb{R}^3$ and a rotation $R_\alpha\in SO(2)$. There exits the one-to-one correspondence between the group actions and the matrix multiplication
\begin{equation}\label{multi}
\Phi_{Q,\alpha}(x,y,t):=L_{(a,b,c)}\circ R_\alpha (x,y,t) \longleftrightarrow
\left(
\begin{array}{cccc}
1&0&0&0\\
a&\cos\alpha & -\sin\alpha&0\\
b&\sin\alpha &\cos\alpha &0\\
c&b&-a&1
\end{array}
\right)
\left(
\begin{array}{c}
1\\
x \\
y \\
t
\end{array}
\right).
\end{equation}%
Let $\Sigma$ be a smooth hypersurface in $H_1$. Recall that a point $p\in \Sigma$ is called \textit{singular} if the contact plane conincides with the tangent plane at $p$, namely, $\xi_p=T_p\Sigma$; otherwise $p$ is called \textit{regular}. Denote $S_\Sigma$ is the set of singular points. It is easy to see that $S_\Sigma$ is a closed set.  At each regular point $p$, there exists (unique up to a sign) a vector $e_1\in \xi_p\cap T_p\Sigma$, which defines a one dimensional foliation consisting of integral curves of $e_1$ and we call it the \textit{characteristic curves}. The vector $e_2=Je_1$ perpendicular to $e_1$, in the sense of Levi-metric, is called \textit{Legendrian normal} or \textit{Gauss map} \cite{CHMY1}. Let $D\subset H_1$ be a smooth domain with boundary $\partial D=\Sigma$ and $(\omega^1,\omega^2, \Theta)$ be the dual basis of $(e_1,e_2, T)$ where $T$ is the Reeb vector field such that $\Theta(T)=1$ and $d\Theta(\cdot, T)=0$. Cheng-Hwang-Malchiodi-Yang \cite{CHMY1} study the minimal surface in $H_1$ via the variational approach and define the volume and the p-Area respectively
\begin{align}
V(D)&=\frac{1}{2}\int_D \Theta\wedge d\Theta, \label{volume3}\\
\text{p-Area}(\Sigma)&=\int_\Sigma \Theta\wedge w^1. \label{parea}
\end{align} We point out that $\frac{1}{2}$ is a normalization constant and this volume is just the usual Euclidean volume. While p-Area comes from a variation of the surface $\Sigma$ in the normal direction $fe_2$ for some suitable function with compact support on the regular points of $\Sigma$.
Note that we can continuously extend $\Theta\wedge \omega^1$ over the singular set $S_\Sigma$ such that it vanishes on $S_\Sigma$. Thus, p-Area is globally defined on $\Sigma$. In the same paper, the authors also define the p-mean curvature and of associated p-minimal surfaces; Malchiodi \cite{Malchiodi} summarize the other two equivalent definitions for the p-mean curvature.

\section{Invariants for sets of horizontal lines}
In the section, we will derive the invariants for the set of horizontal lines as shown in Definition \ref{density} and the other expression of the kinematic density for $PSH(1)$.

Given a regular curve $\gamma: t\in I \mapsto H_1$. Its velocity can always be decomposed into the part tangent to the contact plane $\xi$ and the other orthogonal to $\xi$ w.r.t the Levi metric, namely,
\begin{align*}
\gamma'(t)= \underbrace{\gamma'_\xi (t)}_{\in \xi}+\underbrace{\gamma'_T(t)}_{\in T}.
\end{align*}
A \textit{horizontally regular curve} is a regular curve with non-zero contact part
$$\gamma_{\xi}'(t)\neq 0 \text{ for all } t\in I.$$
In \cite{CHL} (Proposition 4.1) the authors show that any horizontally regular curve can be parametrized by the horizontal arc-length $s$ such that $|\gamma'_\xi(s)|=1$. Throughout the article, we always assume that the curve (or line) is parametrized under this condition. Moreover, if $\gamma(s)$ is a curve joining points $A=\gamma(s_0)$ and $B=\gamma(s_1)$, we have the distance
\begin{align}\label{distance}
|AB|=\int_{s_0}^{s_1}|\gamma'_\xi(s)|ds.
\end{align}

Now we characterize the horizontal lines. A \textit{horizontal line} $G$ in $H_1$ is uniquely determined by three parameters $(p,\theta, t)$: the angle $\theta$ $(0\leq \theta <  2\pi)$ of its projection $\pi(G)$ onto $xy$-plane from the positive $x$-axis to the line perpendicular to $G$, the distance $p\geq 0$ from the origin to the \textit{footpoint} $b: (p\cos\theta,p\sin\theta,0)\in \pi(G)$, and the hight of \textit{base point} $B$ by lifting of $b$ to $G$. (as Figure \ref{proj}.)
\begin{figure}[ht]
    \centering
    \includegraphics[scale=0.5]{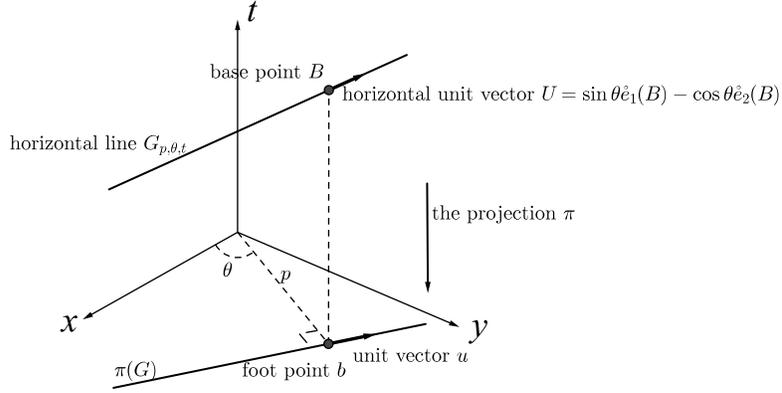}
    \caption{horizontal line $G_{p,\theta,t}$}
    \label{proj}
\end{figure}
Use (\ref{standard}), the curve $\gamma(s)$ can be parametrized by the base point $B$ and the unit vector $U$
\begin{align}
\gamma(s)&=B+s U \nonumber \\
&=(p\cos\theta, p\sin\theta, t)+s(\sin\theta \mathring{e}_1(B)-\cos\theta\mathring{e}_2(B)) \nonumber \\
&=(p\cos\theta+s\cdot \sin\theta, p\sin\theta-s\cdot \cos\theta, t+s(y\sin\theta+x\cos\theta)). \label{pcurve}
\end{align}
(\ref{pcurve}) implies that the points $(x,y,z)$ on $\gamma$ satisfy the conditions
\begin{align*}
p&=x\cos\theta + y \sin\theta, \\
z&=(x\sin\theta - y\cos\theta)p + t.
\end{align*}
Therefore, we have the following expression for any horizontal line
\begin{align}G_{p,\theta,t}=
\Big\{
\begin{array}{rll}
(x,y,z)\in \mathbb{R}^3  &\bigg|&
\begin{array}{l}
p=x\cos\theta + y \sin\theta,\\
z=(x\sin\theta - y\cos\theta)p + t
\end{array}
\end{array}\Big\}.
\end{align}
Suppose the horizonal line $G'_{p',\theta',t'}$ is obtained by $G_{p,\theta,t}$ transformed under a pseudohermition transformation $L_Q\circ \Phi_\alpha$. By the matrix multiplication (\ref{multi}), it is easy to calculate the transformed line $G'_{p',\theta',t'}$ satisfying
\begin{align}\label{changecoo}G'_{p',\theta',t'}:
\left\{
\begin{array}{ll}
\theta'&=\theta-\alpha\\
p'&=p+a\cos(\theta-\alpha)+b\sin(\theta-\alpha)\\
t'&=2p(a\sin(\theta-\alpha)+b\cos(\theta-\alpha))+c+t,
\end{array}
\right.
\end{align} and therefore
\begin{align}\label{density}
dp\wedge d\theta \wedge dt = dp'\wedge d\theta'\wedge dt'.
\end{align} To the end, the 3-form $dp\wedge d\theta \wedge dt$ is invariant under $PSH(1)$.
Moreover, if the measure of a set $X$ of horizontal lines is defined by any integral of the form
\begin{align}
m(X)=\int_X f(p,\theta,t)dp\wedge d\theta \wedge dt
\end{align} for some function $f$. Follow the spirit of classical integral geometry and geometric probability, the most natural measure is to be \textit{invariant under the group of rigid motion} $PSH(1)$ in $H_1$. If we want the measure $m(X)$ to equal the measure of transformed set $m(X')=m(L_Q\circ\Phi_\alpha X)$ for any set $X$ and any motion, by $(\ref{changecoo})(\ref{density})$, the function $f$ has to be constant. Choose the constant to be unity one gets the Definition $\ref{densityoflines}$.

Next we derive the alternative expression of the kinematic density (\ref{kdensity2}. Let $(Q; e_1(Q), e_2(Q), T)$ be the moving frame obtained from the standard frame $(O;\mathring{e}_1, \mathring{e}_2, T)$ at the original $O$ by the left-invariant translation to the point $Q$ and the angle $\phi$ that makes $e_1(Q)$ with the standard vector $\mathring{e}_1(Q)$ (as Figure \ref{Fig2}).
\begin{figure}[!ht]
   \includegraphics[width=0.8 \textwidth]{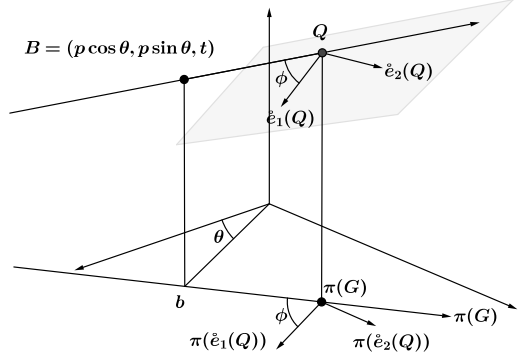}
   \centering \caption{PSH(1)-action}
    \label{Fig2}
\end{figure}
We observe that $e_2(Q)=Je_1(Q) \in \xi_Q$ and the angle $\phi$ indicates the rotation of the frame vector $e_1(Q)$ from the vector $\mathring{e}_1(Q)$ on the contact plane $\xi_Q$. Denote the oriented distance $$h= \pm |\vec{BQ}|,$$ w.r.t. the Levi metric, the sign depends on the direction $\vec{BQ}$ and the orientation of $G$. Since \begin{align*}
Q=(a,b,c)&=B+ h(\sin\theta, -\cos\theta, p) \\
&=(p\cos\theta+h\sin\theta, p\sin\theta-h\cos\theta, t+hp),
\end{align*}we have
\begin{align*}
\left\{
\begin{array}{ll}
a=&p\cos\theta +h\sin\theta, \\
b=&p\sin\theta -h\cos\theta, \\
c=&t+hp, \\
\phi=&\pi/2\pm\theta (\mbox{the sign needs the orientations of line }G).
\end{array}\right.
\end{align*}
Take the derivatives and make the wedge product all together, we reach the other expression (\ref{kdensity2}) of the kinematic density for the group of motions in $H_1$
\begin{align}
da\wedge db\wedge dc \wedge d\phi = dp \wedge d\theta \wedge dt \wedge dh=dG\wedge dh,
\end{align}which is indeed invariant under $PSH(1)$ by $(\ref{density})$.

\section{Proof of Theorem \ref{volume1}}
We shall prove the lemma first. Given a horizontal line $\gamma(s)$ and two points $A=\gamma(s_0)$, $B=\gamma(s_1)$ on $G$. Since $G$ is horizontal, $B\in \xi_A\cap \xi_B$. If we consider the distance joining $A$ and $B$ defined as (\ref{distance}), the infinitesimal length $|\gamma'(s)|$ varies by the Levi-metric defined on different contact plane $\xi_{\gamma(s)}$ for all points $\gamma(s)$ between $A$ and $B$; while by considering $B\in \xi_A$, we define the other distance
\begin{align}\label{distance2}
|AB|_A:=\int^{s_1}_{s_0}|\gamma'(s)|_{\xi_A}ds,
\end{align}where the infinitesimal length $|\gamma'(s)|_{\xi_A}$  depends only on the contact plane $\xi_A$ of the initial point $A$. However, the lemma below shows that both distances are exactly same.

\begin{lem}\label{elength}
Given an oriented horizontal line $\gamma(s)$, parametrized by horizontal arc-length $s$, passing through the points $A=\gamma(s_0)$ to $B=\gamma(s_1)$. Then two distances defined by $(\ref{distance})$ and $(\ref{distance2})$ are same, $|AB|_A=|AB|$.
\end{lem}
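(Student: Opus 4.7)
The plan is to reduce both distances to a direct computation on the explicit parametrization $\gamma(s)=(p\cos\theta + s\sin\theta,\, p\sin\theta-s\cos\theta,\, t+sp)$ of $G_{p,\theta,t}$ already obtained in equation (\ref{pcurve}). Differentiating gives the \emph{constant} Euclidean vector
\[
\gamma'(s)=(\sin\theta,\,-\cos\theta,\,p),
\]
and a short check using the formulas $\mathring e_1=\partial_x+y\partial_t$, $\mathring e_2=\partial_y-x\partial_t$ at the point $\gamma(s)$ shows that at every parameter $s$,
\[
\gamma'(s)=\sin\theta\,\mathring e_1(\gamma(s))-\cos\theta\,\mathring e_2(\gamma(s)).
\]
Thus the left-invariant-frame coefficients of $\gamma'(s)$ are the constants $(\sin\theta,-\cos\theta)$. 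Because $\{\mathring e_1,\mathring e_2\}$ is a Levi-orthonormal frame of $\xi$ at every base point, this immediately yields $|\gamma'(s)|_{\xi_{\gamma(s)}}=1$ and hence $|AB|=\int_{s_0}^{s_1}1\,ds=s_1-s_0$.

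For the second distance I will interpret $|AB|_A$ via the identification of $\xi_A$ with an affine plane in $\mathbb R^3$ through $A$, which is justified by the remark in the lemma that $B\in\xi_A\cap\xi_B$. Writing $A=\gamma(s_0)$, $B=\gamma(s_1)$, a direct subtraction gives
\[
\vec{AB}=(s_1-s_0)\bigl(\sin\theta,\,-\cos\theta,\,p\bigr)=(s_1-s_0)\bigl[\sin\theta\,\mathring e_1(A)-\cos\theta\,\mathring e_2(A)\bigr],
\]
where the second equality is obtained by computing $\mathring e_1(A)$ and $\mathring e_2(A)$ explicitly and collecting the $t$-component (the $s_0$-dependent terms cancel). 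This both confirms $\vec{AB}\in\xi_A$ and shows that the infinitesimal length $|\gamma'(s)|_{\xi_A}$, interpreted through the constant frame coefficients $(\sin\theta,-\cos\theta)$, equals $1$ for every $s$. Therefore $|AB|_A=\int_{s_0}^{s_1}1\,ds=s_1-s_0=|AB|$.

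The only genuine subtlety — and what I regard as the main obstacle — is pinning down the definition of $|\gamma'(s)|_{\xi_A}$ on the right-hand side of (\ref{distance2}), since $\gamma'(s)$ lives in $T_{\gamma(s)}H_1$ rather than in $T_A H_1$. The cleanest resolution is to exploit the fact that the Levi metric, written as $\tfrac12((dx)^2+(dy)^2)$, depends only on the $\partial_x,\partial_y$ components of a horizontal vector and not on the base point, so left-translating $\gamma'(s)$ to $A$ (equivalently, keeping its $\mathring e_1,\mathring e_2$ coordinates) leaves its length unchanged. With that identification in place, the preceding computation closes the proof in one line.
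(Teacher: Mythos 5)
Your proof is correct and follows essentially the same route as the paper: both arguments rest on the observation that $\gamma'(s)=\sin\theta\,\mathring e_1-\cos\theta\,\mathring e_2$ has constant frame coefficients along the line, hence lies in $\xi_A\cap\xi_{\gamma(s)}$ and has unit Levi length whether measured at $A$ or at $\gamma(s)$. Your version merely makes explicit (via the coordinate computation showing the $t$-components agree because $y\sin\theta+x\cos\theta\equiv p$ along $G$) what the paper asserts as ``clearly.''
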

\begin{proof}
Note that since $\gamma(s)$ is horizontal, $A\in \xi_B$ and $B\in \xi_B$. Thus, the distance $|AB|_A=|AB|_B$. Therefore, we can always regard any point between between $A$ and $B$ being parametrized as the end of the vector ejecting from $A$ by
$\gamma(s):= A+s(\sin\theta\mathring{e}_1(A)-\cos\theta\mathring{e}_2(A))$. Clearly, its tangent $\gamma'(s)=\sin\theta\mathring{e}_1(A)-\cos\theta\mathring{e}_2(A)$ is a unit vector on $\xi_A\cap\xi_{\gamma(s)}$ for any $s\in [s_0,s_1]$, so $|\gamma'(s)|_{\xi_A}=|\gamma'(s)|_{\xi_{\gamma(s)}}=1$ and the result follows immediately.
\end{proof}

Now we prove Theorem \ref{volume1}.
\begin{proof}
First we observe that the slope of projection $\pi(G)$ of $G$ on $xy$-plane is $-\cot\theta$, which is independent of the orientation of $G$. Now fixed a pair of $(p, \theta)$ and consider the cross-section of domain $D$ and the vertical plane along the projection $\pi(G)$
\begin{align*}
S_{p,\theta}=\Big\{ (x,y,t)\in \mathbb{R}^3; \ p&=x\cos\theta+y\sin\theta, \\
(x,y)&\in \pi(G\cap D), \ t\in I_2 \text{ for some interval }I_2 \Big\}.
\end{align*}
Since the projection $\pi(S_{p,\theta})$ onto $xy$-plane again is $\pi(G)$, we may set the first two coordinates of points on $S_{p,\theta}$ satisfying
$$
y=y_{p,\theta}(x)=p-x\cot\theta.
$$
Thus, for $\theta\neq 0$ or $\pi$, the plane $S_{p,\theta}$ can be parametrized by
$$X:(u,v)\in I_1\times I_2 \mapsto (x(u,v),y(u,v),t(u,v)),$$ for some interval $I_1$, $I_2$ depending the range of domain $D$, where
\begin{align}\label{xyz1}
x(u,v)&=u, \nonumber \\
y(u,v)&=y_{p,\theta}(u)=p-u\cot\theta,\\
t(u,v)&=v. \nonumber
\end{align}
Now we use the following Lemma.
\begin{lem}[\cite{CHL} Lemma 8.7]\label{equivalent}
Let $E=\alpha\mathbb{X}_u + \beta\mathbb{X}_v$ be the tangent vector field defined on the regular surface $X(u,v)$ in $H_1$. Then the vector $E$ is also on the contact bundle $\xi$ (and hence in $TH_1\cap\xi$) if and only if pointwisely the coefficients $\alpha$ and $\beta$ satisfy
\begin{align}\label{belonginboth2}
\alpha(t_u+xy_u-y x_u)+ \beta(t_v+xy_v-y x_v) =0.
\end{align}
\end{lem}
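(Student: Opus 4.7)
The plan is to prove the lemma by directly applying the defining property of the contact bundle, namely that $\xi = \ker \Theta$ where $\Theta = dt + xdy - ydx$ is the standard contact form on $H_1$. Since $\xi$ is already a subbundle of $TH_1$, once we show $E \in \xi$ we automatically have $E \in TH_1 \cap \xi$, so the only real content is the vanishing of $\Theta(E)$.

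First I would compute the pullback of $\Theta$ along the parametrization $X(u,v) = (x(u,v), y(u,v), t(u,v))$. Applying $\Theta$ to the coordinate tangent vectors gives
\begin{align*}
\Theta(X_u) &= t_u + x\,y_u - y\,x_u,\\
\Theta(X_v) &= t_v + x\,y_v - y\,x_v,
\end{align*}
which are exactly the two expressions appearing in the claimed condition. This step is just an unwinding of definitions and should be essentially immediate.

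Next, by linearity of the $1$-form $\Theta$ over smooth functions, for $E = \alpha X_u + \beta X_v$ we obtain
\begin{equation*}
\Theta(E) = \alpha\,\Theta(X_u) + \beta\,\Theta(X_v) = \alpha(t_u + x y_u - y x_u) + \beta(t_v + x y_v - y x_v).
\end{equation*}
Since $E$ is already a tangent vector to $H_1$ (being a linear combination of $X_u$ and $X_v$), the condition $E \in \xi$ is equivalent to $\Theta(E) = 0$ pointwise on the surface. This gives the stated equivalence in both directions at once.

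There is really no hard step here: the whole lemma is a translation of $\ker \Theta$ into the coordinate expressions $(x(u,v), y(u,v), t(u,v))$. The only small thing to be careful about is that the characterization is pointwise, so $\alpha$ and $\beta$ may be arbitrary smooth functions on the parameter domain; the equivalence $\Theta(E)_{(u,v)} = 0$ iff $E_{(u,v)} \in \xi_{X(u,v)}$ holds at each point independently, which is precisely what the statement asserts.
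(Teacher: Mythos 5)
Your proof is correct: the lemma is exactly the statement that $E\in\xi=\ker\Theta$ with $\Theta=dt+x\,dy-y\,dx$, and evaluating $\Theta$ on $E=\alpha X_u+\beta X_v$ by linearity gives precisely the condition (\ref{belonginboth2}). The paper itself gives no proof (it quotes the lemma from \cite{CHL}), and your argument is the standard one that the cited source uses; note only that the paper's line ``$d\Theta=dt+xdy-ydx$'' is a typo for $\Theta$, which you have implicitly corrected.
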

Since $X(u,v)\cap \xi_{X(u,v)}$ is an one-dimensional foliation (a horizontal line in this case) $E$ restricted on $S_{p,\theta}$, $E$ is a linear combination of $X_u$ and $X_v$. By Lemma \ref{equivalent}, we choose $\alpha:=-(t_v+xy_v-yx_v)$ and $\beta:= (t_u+xy_u-yx_u)$ which satisfy $(\ref{belonginboth2})$. Use (\ref{xyz1}), we have
\begin{align}\label{Elength1}
E:= E(u,v)&:=- (t_v+xy_v-yx_v)X_u+(t_u+xy_u-yx_u)X_v  \nonumber \\
&=-(1+x\cdot 0-y\cdot 0)(1,y',0)+(0+xy'-y)(0,0,1)  \nonumber \\
&=(-1,-y',xy'-y) \nonumber \\
&=(-1)\mathring{e}_1(x,y,z)+(-y')\mathring{e}_2(x,y,z).
\end{align}
By Lemma \ref{elength} and (\ref{Elength1}),
\begin{align}
\sigma&=|E|_A=|E| \label{Elength2} \\
&= \int_{q\in E}\sqrt{<E(q),E(q)>_{\xi_q}}\nonumber \\
&=\int_{u\in I_1}\sqrt{1+(y')^2}du \nonumber \\
&=\int_{u\in I_1} |\csc\theta|du. \nonumber
\end{align}
When $\theta=0$ or $\pi$, the horizontal lines lie on the planes $S_{p,\theta}$ parallel to the $yt$-plane, and therefore $d\theta=0$, which implies the density
\begin{align}\label{zeropi}
dG=0.
\end{align}

Finally, combine both cases $(\ref{Elength2})(\ref{zeropi})$,
\begin{align*}
\int_{G_{p,\theta, t}\cap D\neq \emptyset} \sigma dG &= \int_{\theta\neq 0, \pi} \sigma dG+ \int_{\theta=0, \pi } \sigma dG\\
&=\int \Big(\int_{x\in I_1}|\csc\theta|dx  \Big) dt\wedge dp\wedge d\theta\\
&=\int\int_{x\in I_1} |\csc\theta|dx \wedge dt \wedge (dx\cos\theta + dy\sin\theta) \wedge d\theta\\
&=\int\int_{x\in I_1} dx\wedge dt \wedge dy \wedge d\theta\\
&=2\pi V(D),
\end{align*}where we use the fact that $dx\wedge dy \wedge dt$ is Lebesque volume form in $\mathbb{R}^3$ in the last identity, and complete the proof.
\end{proof}

Remark: if $D$ consists of finitely many simply-connected subsets, then the right-hand-side of (\ref{volume1}) becomes the sum of volumes for each subset.

\section{Proof of Theorem \ref{randomm}}
\begin{proof}
Give a fixed line segment $v$ outside of the boundary $\Sigma$. There exists an horizontal line $G$ along the vector $v$ penetrating $D$ at two points. Now move $v$ in the direction of $G$ from \textit{outside} of $\Sigma$ until the end of $v$ leaving $\Sigma$. The condition $v\cap D\neq \emptyset$ gives the distance $\sigma+\ell$ that the end of the vector $v$ traveling over $G$, so by Theorem \ref{volume1} and (\ref{integraldg}) we have
\begin{align*}
m(v; v\cap D\neq \emptyset)
&=\int_{v\cap D\neq \emptyset}dp\wedge d\theta \wedge dt \wedge dh\\
&=\int_{G\cap D\neq \emptyset}(\sigma+ \ell)dG\\
&=2\pi V(D)+2 \l  \mbox{ p-Area}(\Sigma).
\end{align*}
\end{proof}

\end{document}